\newtheorem{theorem}{Theorem}[section]
\newtheorem{proposition}[theorem]{Proposition}
\newtheorem{corollary}[theorem]{Corollary}
\theoremstyle{definition}
\theoremstyle{remark}
\newtheorem{remark}[theorem]{Remark}
\def\N{\mathbb{N}}
\def\F{\mathcal{F}}
\def\U{\mathcal{U}}
\def\V{\mathcal{V}}
\begin{document}

\title[Monochromatic exponential patterns]
{Monochromatic exponential triples: 
\\
an ultrafilter proof}

\author{Mauro Di Nasso}

\author{Mariaclara Ragosta}

\address{Dipartimento di Matematica\\
Universit\`a di Pisa, Italy}

\email{mauro.di.nasso@unipi.it}
\email{mariaclara.ragosta@phd.unipi.it}

\subjclass[2000]
{Primary 05D10; Secondary 05A17, 54D80.}

\keywords{Ramsey theory, combinatorial number theory, ultrafilters.}

\begin{abstract}
We present a short ultrafilter proof of the existence of
mono\-chromatic exponential triples $\{a, b, b^a\}$ in any finite coloring of
the natural numbers. 
The proof is given from scratch and uses only Ramsey's theorem, 
the notion of asymptotic density and the definition of ultrafilter as prerequisites.
We then generalize the construction using a special ultrafilter
whose existence is well known in the algebra of ultrafilters,
and prove a new result on the existence of infinite
monochromatic exponential patterns.
\end{abstract}

\maketitle

\maketitle

\section*{Introduction}

Arithmetic Ramsey theory is a branch of combinatorics
where one looks for monochromatic patterns
in the natural numbers (or in the integers) that are defined by 
arithmetic operations.
Probably the simplest (but still relevant) example goes back to 1918, when 
I. Schur \cite{sh} proved
that in any finite coloring of the natural numbers
there is always a monochromatic triple $a, b, a+b$.\footnote
{In fact, he proved the following finite version of that property:
For every $r$ there exists $N$
such that in every $r$-coloring $\{1,\ldots,N\}=C_1\cup\ldots\cup C_r$
one finds a monochromatic triple $\{a, b, a+b\}\subseteq C_i$.}
Another classical result in this area is 
\emph{van der Waerden's Theorem} \cite{vdW}:
In any finite coloring of the natural numbers,
one finds arbitrarily long arithmetic progressions
$a, a+d, \ldots, a+\ell d$ that are monochromatic.

In the recent article \cite{sa}, J. Sahasrabudhe proved the existence 
of several monochromatic models defined by means of
multiplications and exponentiations.
His general results require rather complex combinatorial arguments, 
so he considered the simple case of exponential triples as a 
starting point to illustrate his proof:

\begin{itemize}
\item
\emph{Theorem.} 
In any finite coloring 
$\N=C_1\cup\ldots\cup C_r$ there exists
a monochromatic exponential triple $\{a, b, b^a\}\subseteq C_i$.
\end{itemize}

However, even in this simple case, nontrivial tools 
and notions are needed, namely repeated applications 
of the finite version of \emph{van der Waerden's Theorem} and 
the \emph{compactness property} on the convergence of (sub)sequences 
in the topological space $\{1,\ldots,k\}^\N$ of $k$-colorings.

The goal of this paper is to show that, as is the case with other 
problems in Ramsey arithmetic theory, the tool of ultrafilters 
can also be useful for finding exponential patterns. 
In fact, a direct use of ultrafilters combined with a basic ingredient 
of Sahasrabudhe's argument, namely the use of the function $f(n,m)=2^nm$, 
yields a few-line proof of the above theorem 
(see Corollary \ref{corollary1b}).

In the second part of the paper, we extend the construction 
and show that the ultrafilter technique can also produce \emph{infinite}
monochromatic exponential patterns (see Corollary \ref{corollary2b}).
This is to be contrasted with the fact that the monochromatic patterns found in \cite{sa}, 
while possessing a richer combinatorial structure that also involves multiplications, 
are all finite in nature.

This article is organized into two sections.
In Section 1, the existence of monochromatic exponential triples $a, b, b^a$
is proved from scratch; in particular, no use is made of van der Waerden's Theorem 
or results from the algebra of ultrafilters.
As a first step we prove that, given any ultrafilter with a specified property $(\star)$ 
we obtain monochromatic patterns of the form $\{x, y, 2^xy\}$, 
and thus also of the form $\{a, b, b^a\}$. Then, we prove the existence of 
such ultrafilters using only simple properties 
of asymptotic density and of sets of differences.

In Section 2, we consider ultrafilters satisfying $(\dagger)$, 
a property stronger than $(\star)$. 
Their existence is not a problem, since it follows directly 
from \emph{Brauer's Theorem} \cite{br}, an improved version 
of van der Waerden's Theorem in which even the common 
difference of the monochromatic arithmetic progression belongs to the same color.\footnote
{~For those familiar with the algebra of ultrafilters, it is worth mentioning 
that every idempotent minimal ultrafilter of $(\beta\N,\oplus)$ satisfies the 
desired property $(\dagger)$.}
Extending the arguments used in Section 1 by an inductive process, 
we construct a monochromatic infinite sequence with the property 
that all appropriate exponentiations of its elements belong to that same color. 

\smallskip
Following the usual terminology of Ramsey Theory, 
in the following by ``finite coloring'' of a set $X$ is meant 
a finite partition $X=C_1\cup\ldots\cup C_r$, where the pieces 
$C_i$ are called ``colors.'' 
A subset $A$ is ``monochromatic'' with respect to the
coloring $X=C_1\cup\ldots\cup C_r$ if $A\subseteq C_i$ for some $i$.

Ramsey theory owes its name to Ramsey's Theorem \cite{ra}, 
a fundamental classical result in combinatorics. 
We recall below its formulation for pairs. 
(It is the only result of the theory that will be used in Section 1).

\smallskip
\noindent
\emph{Ramsey's Theorem} (1930).
Let $[\N]^2=C_1\cup\ldots\cup C_r$ be a finite coloring
of the set of pairs of natural numbers. Then there exists an 
infinite set $H$ which is ``homogeneous" for that coloring, that is,
its set of pairs $[H]^2\subseteq C_i$ is monochromatic.

\medskip
\section{Monochromatic exponential triples: a self-contained proof}

As noted in the introduction, this section is entirely self-contained 
in that it is not based on any relevant results from arithmetic Ramsey Theory or
from the algebra of ultrafilters, with the only exception of 
Ramsey's Theorem for pairs.
Let us first recall the notion of an ultrafilter.\footnote
{~See,\emph{e.g.}, Chapter 3 of \cite{hs}.} 

\smallskip
A \emph{filter} $\F$ on a set $I$ is a nonempty proper family of subsets of $I$ 
that is closed under supersets and under finite intersections. Precisely:
\\
(1) $\emptyset\notin\F$ and $I\in\F$;
(2) $B\supseteq A\in\F\Rightarrow B\in\F$;
(3) $A,B\in\F\Rightarrow A\cap B\in\F$.

By a direct application of \emph{Zorn's Lemma}, we obtain the existence 
of filters that are maximal with respect to inclusion; such filters are called \emph{ultrafilters}.
It is easy to show that an ultrafilter $\U$ is a filter that satisfies the following additional property:
(4) $A\notin\U\Rightarrow A^c\in\U$.

Ultrafilters are particularly relevant in Ramsey Theory because they satisfy the 
following Ramsey property:
``If $\U$ is an ultrafilter on the set $I$ and
$I=C_1\cup\ldots\cup C_r$ is a finite coloring, then exactly one of the colors $C_i\in\U$."

Two basic methods for producing new ultrafilters from given ultrafilters are as follows.

\begin{enumerate}
\item
If $\U$ is an ultrafilter on $I$ and $f:I\to J$ is a function,
the \emph{image ultrafilter} of $\U$ under $f$ is the ultrafilter
$f_*(\U)$ on $J$ defined by setting for every $B\subseteq J$:
$$B\in f_*(\U)\ \Leftrightarrow\ f^{-1}(B)=\{i\in I\mid f(i)\in B\}\in\U.$$
\item
If $\U$ and $\V$ are ultrafilters on the sets $I$ and $J$ respectively,
their \emph{tensor product} $\U\otimes\V$ 
is the ultrafilter on the Cartesian product
$I\times J$ defined by setting for every $X\subseteq I\times J$:
$$X\in\U\otimes\V\ \Leftrightarrow\ \{i\in I\mid \{j\in J\mid (i,j)\in X\}\in\V\}\in\U.$$
\end{enumerate}

That the above families $f_*(\U)$ and $\U\otimes\V$ actually satisfy 
the properties of an ultrafilter can be verified in a straightforward manner.

The following result incorporates into the ultrafilter framework 
the basic ingredient of Sahasrabudhe's proof, namely the use of the 
function $(n,m)\mapsto 2^nm$.

\begin{theorem}\label{theorem1}
Let $f:\N\times\N\to\N$ be the function $f(n,m)=2^n m$
and let $\U$ be any ultrafilter on $\N$ that satisfies the following property:
\begin{itemize}
\item[($\star$)]
If $X\in\U$ then for every $\ell$ there exists
a triple $\{b,c,b+\ell c\}\subseteq X$.
\end{itemize}
Then for every $A\in f_*(\U\otimes\U)$
there exists a triple $\{x, y, 2^x y\}\subseteq A$.
\end{theorem}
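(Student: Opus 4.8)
The plan is to first unfold the hypothesis. By the definitions of the image ultrafilter and of the tensor product, $A\in f_*(\U\otimes\U)$ means $f^{-1}(A)\in\U\otimes\U$, that is $\{n\in\N\mid A_n\in\U\}\in\U$, where I write $A_n:=\{m\in\N\mid 2^n m\in A\}$ for the $n$-th ``section'' of $A$. Call this set $G:=\{n\mid A_n\in\U\}$, so that $G\in\U$ and $A_n\in\U$ for every $n\in G$. The whole argument will be carried out at the level of these sections.

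Next I would isolate the combinatorial core. Observe that if I can find an element $x\in A$ together with an index $i$ such that both $i\in G$ and $i+x\in G$, then I am done: since $A_i,A_{i+x}\in\U$, their intersection lies in $\U$ and in particular is nonempty, so I may pick $m\in A_i\cap A_{i+x}$ and set $y:=2^i m$. Then $y\in A$ because $m\in A_i$, while $2^x y=2^{\,i+x}m\in A$ because $m\in A_{i+x}$, and $x\in A$ by the choice of $x$; hence $\{x,y,2^x y\}\subseteq A$. So everything reduces to producing an $x\in A$ that occurs as a ``gap'' $i,\,i+x$ inside $G$.

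This is exactly where property $(\star)$ enters, and the delicate point is to guarantee that the gap I produce actually lands in $A$, rather than merely in the difference set of $G$. The trick I would use is to couple the choice of $\ell$ with the section it refers to. Fix any $n_1\in G$; then $A_{n_1}\in\U$, so $G\cap A_{n_1}\in\U$. Apply $(\star)$ to the set $X:=G\cap A_{n_1}$ with $\ell:=2^{n_1}$, obtaining a triple $\{b,c,b+2^{n_1}c\}\subseteq X$. Now two things happen at once: since $c\in A_{n_1}$ we get $2^{n_1}c\in A$, so $x:=2^{n_1}c$ is a legitimate element of $A$; and since $b,\,b+2^{n_1}c\in G$, the index $i:=b$ satisfies $i\in G$ and $i+x=b+2^{n_1}c\in G$. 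Feeding this $x$ and $i$ into the reduction above yields the desired triple.

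I expect the only real obstacle to be the alignment carried out in that last step: one must choose the multiplier $\ell$ in $(\star)$ to be precisely $2^{n_1}$ and, simultaneously, intersect $G$ with the section $A_{n_1}$, so that the resulting difference $\ell c=2^{n_1}c$ is forced to belong to $A$. Once this matching is seen, the verification that $\{2^{n_1}c,\;2^b m,\;2^{\,b+2^{n_1}c}m\}\subseteq A$ is a routine check of the three membership conditions.
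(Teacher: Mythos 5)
Your proof is correct and is essentially the paper's own argument: your $G$, $A_n$, $n_1$, and $m$ are exactly the paper's $\widehat{A}$, $A/2^n$, $a$, and $d$, and the key move of applying $(\star)$ with $\ell:=2^{n_1}$ to $G\cap A_{n_1}$ so that $2^{n_1}c$ lands in $A$ while $b$ and $b+2^{n_1}c$ stay in $G$ is precisely the paper's choice of $\{b,c,b+2^a c\}\subseteq A/2^a\cap\widehat{A}$. The only difference is presentational: you state the reduction step first and then produce the gap, whereas the paper runs the construction linearly.
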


\begin{proof}
By the definitions, $A\in f_*(\U\otimes\U)\Leftrightarrow
\widehat{A}:=\{n\in\N\mid A/{2^n}\in\U\}\in\U$, where we denoted
$A/{2^n}:=\{m\in\N\mid 2^n m\in A\}$.
Pick any $a\in\widehat{A}$, so that $A/{2^a}\in\U$. By the property $(\star)$,
there exist
$b, c, b+2^a c\in A/{2^a}\cap\widehat{A}\in\U$.
Since $A/{2^b}, A/{2^{b+2^a c}}\in\U$, we can 
pick an element $d\in A/{2^b}\cap A/{2^{b+2^a c}}$.
Finally, let $x:=2^a c$ and $y:=2^b d$.
Then $x\in A$ since $c\in A/{2^a}$; $y\in A$ since $d\in A/{2^b}$;
and $2^xy=2^{2^ac}\,2^bd=2^{b+2^ac}\,d\in A$
since $d\in A/{2^{b+2^ac}}$.
\end{proof}

\begin{corollary}\label{corollary1a}
In any finite coloring $\N=C_1\cup\ldots\cup C_r$ there exists
a monochromatic pattern $\{x, y, 2^x y\}\subseteq C_i$.
\end{corollary}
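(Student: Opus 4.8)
The plan is to derive Corollary \ref{corollary1a} as an immediate consequence of Theorem \ref{theorem1}, so the only real work is to exhibit an ultrafilter $\U$ satisfying property $(\star)$ and then invoke the Ramsey property of ultrafilters. First I would observe that once such a $\U$ is fixed, the image ultrafilter $\V:=f_*(\U\otimes\U)$ is a genuine ultrafilter on $\N$. Given any finite coloring $\N=C_1\cup\ldots\cup C_r$, the Ramsey property of ultrafilters guarantees that exactly one color $C_i$ belongs to $\V$. Applying Theorem \ref{theorem1} with $A=C_i\in\V=f_*(\U\otimes\U)$ then yields a triple $\{x,y,2^xy\}\subseteq C_i$, which is precisely the desired monochromatic pattern.

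The one genuine obstacle is that Theorem \ref{theorem1} is stated conditionally: it presupposes the existence of an ultrafilter $\U$ enjoying property $(\star)$, namely that every $X\in\U$ contains, for each $\ell$, a triple $\{b,c,b+\ell c\}$. So to make the corollary unconditional I must supply such a $\U$. I expect the paper to produce it in the lines immediately following, using the elementary tools advertised in the introduction (asymptotic density and sets of differences), rather than van der Waerden's theorem. The natural strategy is to take $\U$ to extend the filter generated by all sets of positive upper asymptotic density, or more carefully the family of sets whose complements are ``small'' in a density sense; the point being that a set of positive density, by a Schur/difference-set argument, must contain configurations $\{b,c,b+\ell c\}$ for every $\ell$.

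Concretely, I would isolate the combinatorial heart as a lemma: if $X\subseteq\N$ has positive upper density then for every $\ell$ there exist $b,c$ with $\{b,c,b+\ell c\}\subseteq X$. The idea is that the dilate $\ell\cdot X=\{\ell x : x\in X\}$ together with $X$ forces a solution: since $X$ has positive density, its difference set $X-X$ is syndetic (indeed contains all sufficiently structured differences), and one can locate $c$ so that both $c$ and some $b$ with $b+\ell c$ land in $X$. Having established this lemma, I would then check that the family of positive-upper-density sets has the finite intersection property in the appropriate sense, or rather extend the relevant filter to an ultrafilter $\U$ all of whose members still have positive upper density, so that $(\star)$ is inherited by every $X\in\U$.

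The delicate step, and the one I expect the authors to handle with care, is precisely the passage from ``positive density'' to an \emph{ultrafilter} in which \emph{every} member has the required richness: positive-density sets are not closed under intersection, so one cannot naively take them as a filter base. The standard remedy is to work instead with the dual notion, building $\U$ so that it avoids all density-zero sets (equivalently, every member of $\U$ has positive upper density), which does form a legitimate filter that extends to an ultrafilter by Zorn's Lemma. Once property $(\star)$ is verified for this $\U$ via the difference-set lemma, the corollary follows by the short argument above, and the accompanying monochromatic exponential triple $\{a,b,b^a\}$ is recovered by the substitution that turns $\{x,y,2^xy\}$ into exponential form.
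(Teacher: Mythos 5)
Your reduction of the corollary to Theorem \ref{theorem1} is exactly the paper's proof: fix $\U$ with $(\star)$, note $f_*(\U\otimes\U)$ is an ultrafilter on $\N$, let $C_i$ be the (unique) color in it, and apply the theorem. But the supporting construction you sketch for producing such a $\U$ contains a genuine error. Your proposed ``combinatorial heart'' --- that every $X\subseteq\N$ of positive upper density contains, for each $\ell$, a triple $\{b,c,b+\ell c\}$ --- is false. Take $X$ to be the odd numbers and $\ell=1$ (or any odd $\ell$): then $b+\ell c$ is even whenever $b,c\in X$, so no such triple exists, even though $\overline{d}(X)=1/2$. Consequently your remedy of building $\U$ to avoid all density-zero sets does not secure $(\star)$: the family $\{A\mid \underline{d}(A)=1\}\cup\{\text{odd numbers}\}$ has the finite intersection property, so there is an ultrafilter all of whose members have positive upper density that contains the odds, and $(\star)$ fails for it. Syndeticity of $X-X$, which you invoke, is no help here, since the element $c$ must itself lie in $X$ and not merely be realized as a difference.

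What the paper does instead is demand that every member of $\U$ be simultaneously of positive upper density \emph{and} a $\Delta$-set (i.e., contain $\Delta(Y)$ for some infinite $Y$); note the odd numbers are not a $\Delta$-set, since any infinite $Y$ has two elements congruent mod $2$, so $\Delta(Y)$ meets the evens. The filter is generated by $\{A\mid \underline{d}(A)=1\}$ together with the $\Delta^*$-sets (sets meeting $\Delta(X)$ for every infinite $X$); its finite intersection property rests on the partition regularity of $\Delta$-sets, proved via Ramsey's theorem for pairs, combined with the density argument. Then, for $A$ a $\Delta$-set of positive upper density with $\Delta(X)\subseteq A$, one shows $\Delta(A)\cap\Delta(\ell X)\ne\emptyset$ by a pigeonhole on the shifts $A-x_i$, yielding $a'-a=\ell(x'-x)$ and hence the triple $b:=a$, $c:=x'-x$, $b+\ell c=a'$ in $A$. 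Your intuition that dilates of $X$ should enter was on target, but without the $\Delta$-set half of the hypothesis the argument cannot be repaired.
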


\begin{proof}
Pick an ultrafilter $\U$ with property $(\star)$
and let $C_i$ be the color that belongs to 
the ultrafilter $f_*(\U\otimes\U)$.
\end{proof}

\begin{corollary}\label{corollary1b}
In any finite coloring $\N=C_1\cup\ldots\cup C_r$ there exists
a monochromatic exponential pattern $\{a, b, b^a\}\subseteq C_i$.
\end{corollary}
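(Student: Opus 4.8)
The plan is to deduce Corollary \ref{corollary1b} directly from Corollary \ref{corollary1a}, since the latter already produces a monochromatic pattern of the shape $\{x,y,2^xy\}$. The key observation is that an exponential triple $\{a,b,b^a\}$ is obtained from a pattern $\{x,y,2^xy\}$ by a suitable substitution: if I set $a = x$ and $b = 2^y$, then $b^a = (2^y)^x = 2^{xy}$, which is not quite of the form $2^xy$. So first I would identify the correct substitution that turns the additive-in-the-exponent pattern into a genuine exponential triple $b^a$.

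The cleaner route is to apply Corollary \ref{corollary1a} not to the original coloring of $\N$, but to a coloring pulled back along the exponential map. Concretely, I would start from a finite coloring $\N = C_1 \cup \cdots \cup C_r$ and define an induced coloring of $\N$ by assigning to each $n$ the color of $2^n$ (that is, $n$ gets color $i$ iff $2^n \in C_i$). Applying Corollary \ref{corollary1a} to this induced coloring yields $x, y, 2^xy$ all receiving the same induced color, which means $2^x, 2^y, 2^{2^xy}$ all lie in a single $C_i$. Now I would set $a := 2^x$ and $b := 2^y$, so that $b^a = (2^y)^{2^x} = 2^{y\,2^x} = 2^{2^x y}$; hence $\{a,b,b^a\} = \{2^x, 2^y, 2^{2^xy}\} \subseteq C_i$ is a monochromatic exponential triple.

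The main thing to verify carefully is the exponent arithmetic: I need $b^a$ to come out as $2$ raised to the exponent $2^x y$ that appears in the pattern from Corollary \ref{corollary1a}. With $a = 2^x$ and $b = 2^y$ one computes $b^a = 2^{y \cdot 2^x}$, and since $y \cdot 2^x = 2^x y$ this matches the third element $2^{2^x y}$ of the induced-color pattern exactly. I should also note that $a, b, b^a$ are genuinely natural numbers and that the triple is nondegenerate in the intended sense, both of which are immediate once $x, y \geq 1$. I expect no real obstacle here: the entire content has been front-loaded into Corollary \ref{corollary1a}, and this corollary is just a change of variables via the map $n \mapsto 2^n$, so the proof should be only a few lines.
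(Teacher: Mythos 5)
Your proof is correct and is essentially identical to the paper's: both pull back the coloring along $n\mapsto 2^n$ (i.e., set $D_j=\{n\mid 2^n\in C_j\}$), apply Corollary \ref{corollary1a} to get $\{x,y,2^xy\}\subseteq D_i$, and then take $a:=2^x$, $b:=2^y$, so that $b^a=2^{2^x y}\in C_i$. The exponent arithmetic you check, $(2^y)^{2^x}=2^{y\cdot 2^x}=2^{2^x y}$, is exactly the computation the paper relies on.
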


\begin{proof}
Let $\N=D_1\cup\ldots\cup D_r$ be the coloring
where $D_j=\{n\mid 2^n\in C_j\}$. By the previous corollary there
exists a monochromatic triple $\{x, y, 2^x y\}\subseteq D_i$.
If we let $a:=2^x$, $b:=2^y$, and $c:=2^{2^x y}$, we have
that $\{a, b, c\}\subseteq C_i$ where $c=b^a$.
\end{proof}

The existence of ultrafilters satisfying the property $(\star)$ above 
is a well-known fact.
For example, those familiar with the algebra on the space of ultrafilters 
know that every idempotent minimal ultrafilter of $(\beta\N,\oplus)$ 
satisfies combinatorial properties much stronger than $(\star)$.\footnote
{~Sets belonging to a minimal idempotent ultrafilter are called \emph{central}; 
they are much studied in Ramsey theory because of their combinatorial richness.
See, \emph{e.g.}, Chapter 14 of \cite{hs}.}

However, the existence of such ultrafilters can be proved 
from scratch by simple arguments using only Ramsey's theorem 
for pairs and the basic properties of the asymptotic density.
We show the details below in order to fulfill the promise of keeping 
this section self-contained.

We now recall the fundamental notion of asymptotic density for sets of natural numbers.

The \emph{asymptotic lower density} $\underline{d}(A)$
and the \emph{asymptotic upper density} $\overline{d}(A)$
of a set $A\subseteq\N$ are defined by letting, respectively:
$$\underline{d}(A):=\liminf_{n\to\infty}\frac{|A\cap[1,n]|}{n}\quad\text{and}\quad
\overline{d}(A):=\limsup_{n\to\infty}\frac{|A\cap[1,n]|}{n}.$$

It directly follows from the definitions that
$0\le\underline{d}(A)\le\overline{d}(A)\le 1$, and that
$\underline{d}(A^c)=1-\overline{d}(A)$.
Notice that if $\underline{d}(A)=\underline{d}(B)=1$ then $\underline{d}(A\cap B)=1$,
and if $\overline{d}(A),\overline{d}(B)=0$ then $\overline{d}(A\cup B)=0$.

The \emph{set of differences} of a set $X\subseteq\N$ is
the set 
$$\Delta(X):=\{x'-x\mid x',x\in X, x'>x\}.$$
A set $A\subseteq\N$ is called a $\Delta$-\emph{set} if 
$A\supseteq\Delta(X)$ for some infinite $X$. 
A set $A\subseteq\N$ is called \emph{thick}
if it includes arbitrarily long intervals.

The following are well-known basic properties, but,
as promised, we include (elementary) proofs.

\begin{proposition}
\

\begin{enumerate}
\item
If the set $A$ has upper density $\overline{d}(A)=1$
then $A$ is thick.
\item
Every thick set is a $\Delta$-set.
\item
If $A\subseteq\N$ have positive upper density then for every infinite set $X$,
the intersection $\Delta(A)\cap\Delta(X)$ is nonempty.
\item
The family of $\Delta$-sets has the Ramsey property:
If $A=C_1\cup\ldots\cup C_r$ is a finite coloring of
a $\Delta$-set $A$ then one of the colors $C_i$ is a $\Delta$-set.
\end{enumerate}
\end{proposition}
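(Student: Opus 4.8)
The plan is to prove the four parts essentially in order, since each builds naturally on density and difference-set machinery already introduced.

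\medskip

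\textbf{Part (1).} To show that $\overline{d}(A)=1$ implies $A$ is thick, I would argue by contraposition. If $A$ is \emph{not} thick, there is a fixed bound $L$ on the lengths of intervals contained in $A$; equivalently, every block of $L+1$ consecutive integers meets $A^c$. Partitioning $[1,n]$ into consecutive blocks of length $L+1$ then forces $|A^c\cap[1,n]|\ge \lfloor n/(L+1)\rfloor$, so $\underline{d}(A^c)\ge 1/(L+1)>0$. Using the identity $\underline{d}(A^c)=1-\overline{d}(A)$ recalled just above, this gives $\overline{d}(A)\le 1-1/(L+1)<1$, the desired contradiction. This is a routine counting estimate.

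\medskip

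\textbf{Part (2).} A thick set $A$ contains intervals of every length, so I would greedily extract an infinite set $X=\{x_0<x_1<x_2<\dots\}$ such that for each $k$ the interval $[x_k, x_k+k]$ lies inside $A$. I can arrange, by choosing successive intervals far enough to the right, that the gaps grow so that every difference $x_j-x_i$ (for $i<j$) is realized as a difference of two points lying in a common long interval already placed in $A$. Concretely, one builds nested long intervals so that $\Delta(X)\subseteq A$; the key observation is simply that inside an interval $[a,a+m]\subseteq A$ all differences up to $m$ occur between elements of $A$. I would write $X$ as a subset chosen so that each new point sits inside a fresh long interval containing all the previously needed differences. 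This is the step requiring a little care, but it is elementary.

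\medskip

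\textbf{Part (3).} Here positivity of $\overline{d}(A)$ should force $\Delta(A)$ to meet $\Delta(X)$ for \emph{every} infinite $X$. My approach is a pigeonhole/averaging argument: suppose $\overline{d}(A)=\delta>0$ and let $X=\{x_1<x_2<\dots\}$ be infinite, so $\Delta(X)$ contains differences $d_k=x_{k+1}-x_1$ (or more generally all $x_j-x_i$). If $\Delta(A)\cap\Delta(X)=\emptyset$, then for each $d\in\Delta(X)$ the translates of $A$ are disjoint from $A$ in the sense $A\cap(A-d)=\emptyset$. Taking finitely many differences $d_1,\dots,d_t\in\Delta(X)$, the sets $A, A-d_1,\dots$ would be ``spread out,'' and a counting estimate on a long initial segment bounds how many pairwise-almost-disjoint translates of a positive-density set can coexist, giving a contradiction once $t$ is large relative to $1/\delta$. \textbf{This is the step I expect to be the main obstacle}, since making the disjointness-versus-density count precise (controlling boundary effects from the window $[1,n]$ and choosing the finite subfamily of $\Delta(X)$ correctly) is the delicate part; the cleanest route is to pick $t>1/\delta$ shifts from $\Delta(X)$ and show that pairwise $d_i-d_j\notin\Delta(A)$ forces $t$ disjoint copies of $A$ inside a window, whence $t\cdot\overline{d}(A)\le 1$, contradicting $t>1/\delta$.

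\medskip

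\textbf{Part (4).} Finally, the Ramsey property for $\Delta$-sets follows from Ramsey's Theorem for pairs, which is the one external tool permitted. Given a $\Delta$-set $A\supseteq\Delta(Y)$ with $Y$ infinite and a coloring $A=C_1\cup\dots\cup C_r$, I would color each pair $\{i,j\}\in[Y]^2$ (say $i<j$) by the color class of the difference $y_j-y_i\in\Delta(Y)\subseteq A$. Ramsey's Theorem yields an infinite homogeneous $H\subseteq Y$, all of whose pairwise differences receive the same color $C_i$. Then $\Delta(H)\subseteq C_i$ with $H$ infinite, so $C_i$ is a $\Delta$-set, as required. This step is short and conceptual once the coloring of pairs by differences is set up correctly.
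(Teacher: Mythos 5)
Your proposal is correct and follows essentially the same route as the paper in all four parts: the block-counting estimate for (1) (yours phrased contrapositively via $\underline{d}(A^c)=1-\overline{d}(A)$, the paper's as a direct contradiction), the nested-long-intervals construction for (2), the pigeonhole on pairwise disjoint translates of $A$ for (3), and Ramsey's Theorem for pairs applied to the coloring of $[X]^2$ by the color of the corresponding difference for (4). The only cosmetic difference is in (3), where the paper shifts $A$ by the elements $x_i\in X$ themselves and extracts $x_j-x_i\in\Delta(A)\cap\Delta(X)$, while you shift by the anchored differences $d_k=x_{k+1}-x_1$, which works equally well since $d_j-d_i=x_{j+1}-x_{i+1}\in\Delta(X)$; the density count you flag as delicate is handled exactly as you suggest, using that all translates of $A$ attain density along a common sequence of witnesses for $\overline{d}(A)$.
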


\begin{proof}
(1). For sake of contradiction, assume that there exists $k$ such that
no interval of length $k$ is included in $A$.
By the hypothesis $\overline{d}(A)=1$ there exist
arbitrarily large $N$ such that $|A\cap[1,N]|/N>1-1/2k$.
Write $N=Mk+h$ where $h<k$.
Since no interval $[ki+1,ki+k]$ is included in $A$,
the following inequalities hold:
$$\frac{|A\cap[1,N]|}{N}\le
\frac{1}{Mk}\sum_{i=0}^{M-1}|A\cap[ki+1,ki+k]|+\frac{h}{Mk}\le
\frac{1}{Mk}\sum_{i=0}^{M-1}(k-1)+\frac{1}{M}=1-\frac{1}{k}+\frac{1}{M}.$$
If we take $N$ sufficiently large then $1/M<1/2k$,
and we obtain the contradiction $|A\cap[1,N]|/N<1-1/2k$.

\smallskip
(2). Pick any $m_1\in A$;
then pick an interval $[n_2,m_2]\subseteq A$ of lenght $>m_1$;
then pick an interval $[n_3,m_3]$ of lenght $>m_1+m_2$; and
iterate the process at step $k$ by taking an interval
$[n_{k+1},m_{k+1}]\subseteq A$ of length $>m_1+\ldots+m_k$.
It is easily verified that if $X=(m_k)$ then the difference set
$\Delta(X)\subseteq A$.

\smallskip
(3). Given $X=\{x_1<x_2<\ldots\}$ consider the family $\{A-x_i\mid i\in\N\}$ 
of shifted sets $A-x_i:=\{n\in\N\mid n+x_i\in A\}$.
Notice that for every $i$, the upper density $\overline{d}(A-x_i)=\overline{d}(A)=\alpha>0$.
If the sets $A-x_i$ are pairwise disjoint for $i=1,\ldots, N$, then it is easily verified that
$$\overline{d}((A-x_1)\cup\ldots\cup(A-x_N))= 
\overline{d}(A-x_1)+\ldots+\overline{d}(A-x_N)=N\alpha.$$
If we pick $N>1/\alpha$ we obtain a contradiction.
We conclude that there exist $1\le i<j\le N$ such that $(A-x_i)\cap(A-x_j)\ne\emptyset$,
and hence $x_j-x_i\in \Delta(A)\cap\Delta(X)\ne\emptyset$.

\smallskip
(4). 
Let $X=\{x_1<x_2<\ldots\}$ be an infinite set such that $\Delta(X)\subseteq A$.
Let $[\N]^2=D_1\cup\ldots\cup D_r$ be the finite partition where
$\{n<m\}\in D_j\Leftrightarrow x_m-x_n\in C_j$. 
By \emph{Ramsey's Theorem} for pairs there exists $i$
and an infinite $H$ such that $[H]^2\subseteq D_i$. 
If we let $Y=\{x_h\mid h\in H\}$ then clearly $\Delta(Y)\subseteq C_i$.
\end{proof}

A simple use of the above properties yields the following

\begin{proposition}
\

\begin{enumerate}
\item
Let $A$ be a $\Delta$-set with positive upper density.
Then for every $\ell$ there exists a triple $\{b, c, b+\ell c\}\subseteq A$.
\item
For every finite coloring $\N=C_1\cup\ldots\cup C_r$ and for every $\ell$,
there exists a monochromatic triple $\{b, c, b+\ell c\}\subseteq C_i$.
\end{enumerate}
\end{proposition}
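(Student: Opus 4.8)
The plan is to prove part (1) directly from the third property in the previous proposition, and then to deduce part (2) by exhibiting a \emph{single} color that is simultaneously a $\Delta$-set and of positive upper density, so that part (1) applies to it.

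For part (1), since $A$ is a $\Delta$-set I would fix an infinite set $X=\{x_1<x_2<\cdots\}$ with $\Delta(X)\subseteq A$. The key idea is to realize the increment $\ell c$ as a difference of two elements of $A$ while keeping $c$ itself inside $A$. To this end set $Y:=\{\ell x\mid x\in X\}$, which is again infinite and satisfies $\Delta(Y)=\ell\,\Delta(X)=\{\ell d\mid d\in\Delta(X)\}$. Since $A$ has positive upper density, the third property of the previous proposition, applied to $A$ and the infinite set $Y$, gives $\Delta(A)\cap\Delta(Y)\neq\emptyset$; I pick an element $\ell d$ in this intersection, where $d\in\Delta(X)\subseteq A$. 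From $\ell d\in\Delta(A)$ there exist $b,\,b+\ell d\in A$, and setting $c:=d$ yields $\{b,c,b+\ell c\}\subseteq A$, as wanted.

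For part (2) the heart of the matter is that part (1) requires a color that is \emph{both} a $\Delta$-set and of positive upper density, and a prescribed color need not enjoy either property. I would manufacture such a color as follows. First, not all colors can have upper density $0$, since a finite union of density-zero sets has density zero while $\overline d(\N)=1$; so let $C_{i_1},\dots,C_{i_k}$ (with $k\ge 1$) be the colors of positive upper density and let $Z$ be the union of the remaining colors. Then $\overline d(Z)=0$, whence $\underline d(Z^c)=1-\overline d(Z)=1$ and so $\overline d(Z^c)=1$. By the first property of the previous proposition $Z^c$ is thick, and by the second property it is therefore a $\Delta$-set. Since $Z^c=C_{i_1}\cup\cdots\cup C_{i_k}$, the Ramsey property for $\Delta$-sets (the fourth property) produces one color $C_{i_j}$ that is a $\Delta$-set; being one of the selected colors, it also has positive upper density. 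Applying part (1) to $C_{i_j}$ then gives the desired monochromatic triple.

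The main obstacle I anticipate is precisely this coupling in part (2): a set of positive upper density need not be a $\Delta$-set (for instance the non-multiples of $3$ have density $2/3$ but contain no $\Delta(X)$, since any four integers include two in the same residue class modulo $3$), and conversely a $\Delta$-set can have upper density $0$ (difference sets of very lacunary sequences are sparse). The device that resolves this is to discard the density-zero colors, observe that what remains is a thick, hence $\Delta$-, set, and only then invoke the Ramsey property, which confines the homogeneous color to the positive-density ones. By contrast, part (1) is routine once one hits on scaling $X$ by $\ell$ to convert the third property into the assertion $\ell c\in\Delta(A)$ for some $c\in A$.
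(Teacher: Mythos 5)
Your proposal is correct and follows essentially the same route as the paper: for part (1) you scale $X$ by $\ell$ and apply the intersection property $\Delta(A)\cap\Delta(\ell X)\neq\emptyset$ exactly as the paper does, and for part (2) you discard the density-zero colors, observe the remaining union is thick (hence a $\Delta$-set), and invoke the partition regularity of $\Delta$-sets to find a color that is both a $\Delta$-set and of positive upper density. No gaps; your extra remarks on why neither property alone suffices match the spirit of the paper's argument.
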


\begin{proof}
(1). Let $X$ be an infinite set such that $\Delta(X)\subseteq A$.
Given $\ell$, consider the set $\ell X=\{\ell x\mid x\in X\}$. 
By the previous proposition, the intersection 
$\Delta(A)\cap\Delta(\ell X)\ne\emptyset$,
and so there exist elements $a'>a$ in $A$, and $x'>x$ in $X$
such that $a'-a=\ell x'-\ell x$. Then $b:=a\in A$,
$c:=x'-x\in A$, and $b+\ell c=a'\in A$.

\smallskip
(2). Without loss of generality we can assume that 
$\overline{d}(C_j)>0$ for $j=1,\ldots,s$ and $\overline{d}(C_j)=0$ for $j=s+1,\ldots,r$.
Then $C_{s+1}\cup\ldots\cup C_r$ has upper density $0$, and hence
$B=C_1\cup\ldots\cup C_s$ has lower density $1$.
In particular, $B$ is thick, and hence a $\Delta$-set.
Since $\Delta$-sets are partition regular,
one of the sets of positive density $C_i$ is 
also a $\Delta$-set. Then apply the previous point.
\end{proof}

\begin{remark}
The idea of choosing 
a color that is simultaneously of positive asymptotic density 
and a $\Delta$-set is certainly not new.
As far as the authors know, it was first used by 
V. Bergelson in \cite{be}.
\end{remark}

As a consequence of the previous proposition we obtain
the desired ultrafilters.

\begin{theorem}
The following family has the finite intersection property:\footnote
{~Sets $A$ with the property that $A\cap\Delta(X)\ne\emptyset$ for every
infinite $X$ are named $\Delta^*$-\emph{sets}.}
$$\F=\{A\subseteq\N\mid \underline{d}(A)=1\}\cup
\{A\subseteq\N\mid A\cap\Delta(X)\ne\emptyset \ \text{for every infinite}\ X\},$$
and every ultrafilter $\V$ that extends $\F$ satisfies property $(\star)$
of Theorem \ref{theorem1}.
\end{theorem}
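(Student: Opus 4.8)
The plan is to prove the two assertions separately, with the finite intersection property (FIP) carrying all the weight and the verification of $(\star)$ then being almost immediate. The one reformulation I would record at the outset is that $A$ is a $\Delta^*$-set if and only if $A^c$ is \emph{not} a $\Delta$-set: this is just the contrapositive of the definitions, since $A\cap\Delta(X)=\emptyset$ means $\Delta(X)\subseteq A^c$.

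For the FIP, I would take finitely many members of $\F$ and separate them into those of lower density one, say $A_1,\dots,A_p$, and the $\Delta^*$-sets, say $B_1,\dots,B_q$. Because lower density one is preserved under finite intersections, the set $A:=A_1\cap\dots\cap A_p$ again satisfies $\underline{d}(A)=1$, hence $\overline{d}(A)=1$; by parts (1) and (2) of the first Proposition $A$ is thick and therefore a $\Delta$-set, so $A\supseteq\Delta(X_0)$ for some infinite $X_0$. The decisive point is that the $\Delta^*$-sets are closed under finite intersection: if $B:=B_1\cap\dots\cap B_q$ failed to be $\Delta^*$, then $B^c=B_1^c\cup\dots\cup B_q^c$ would be a $\Delta$-set, and the partition regularity of $\Delta$-sets (part (4) of the first Proposition) would force some $B_j^c$ to be a $\Delta$-set, contradicting that $B_j$ is $\Delta^*$. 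With $B$ a $\Delta^*$-set, its defining property applied to $X_0$ yields $B\cap\Delta(X_0)\neq\emptyset$, and since $\Delta(X_0)\subseteq A$ I conclude $A\cap B\neq\emptyset$, as required. The degenerate cases $p=0$ or $q=0$ are handled by taking $A=\N$ or $B=\N$.

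To check that any ultrafilter $\V\supseteq\F$ satisfies $(\star)$, I would fix $X\in\V$ and an integer $\ell$. First, $X$ must have positive upper density: otherwise $\overline{d}(X)=0$ gives $\underline{d}(X^c)=1$, so $X^c\in\F\subseteq\V$, contradicting $X\in\V$. Second, $X$ must be a $\Delta$-set: otherwise $X^c$ would be a $\Delta^*$-set by the reformulation above, whence again $X^c\in\F\subseteq\V$, a contradiction. Thus $X$ is a $\Delta$-set of positive upper density, and part (1) of the second Proposition supplies the required triple $\{b,c,b+\ell c\}\subseteq X$.

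The main obstacle is the closure of $\Delta^*$-sets under finite intersection inside the FIP argument; this is the one step that genuinely invokes the partition regularity of $\Delta$-sets rather than following straight from the definitions. Everything else reduces to the preservation of density one under intersection, the thick/$\Delta$-set chain, and the complementation dichotomy between $\Delta$- and $\Delta^*$-sets.
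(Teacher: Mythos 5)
Your proof is correct, and it reorganizes the paper's argument in a genuinely different way. The paper proves the finite intersection property by contradiction: if $A_1\cap\ldots\cap A_k=\emptyset$, the complements form a finite coloring of $\N$, and the key fact extracted from the proof of the second proposition --- in every finite coloring of $\N$ some color has positive upper density \emph{and} is a $\Delta$-set --- yields a color $(A_i)^c$ whose largeness contradicts $A_i\in\F$, whichever half of $\F$ the set $A_i$ came from. You instead argue directly: you split the finite subfamily by type, use closure of density-one sets under intersection together with the thick-implies-$\Delta$-set chain to produce a witness $\Delta(X_0)$ inside the density part, and --- this is your key lemma, which the paper never states --- show that $\Delta^*$-sets are closed under finite intersection by dualizing the partition regularity of $\Delta$-sets; the two intersections then meet inside $\Delta(X_0)$. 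Both routes consume exactly the same ingredients (parts (1), (2), (4) of the first proposition and the elementary density facts), but yours isolates the filter property of the $\Delta^*$-sets and never needs a single color that is \emph{simultaneously} of positive density and a $\Delta$-set, whereas the paper's contradiction argument is shorter on the page because it delegates that simultaneity to the earlier proposition. Two cosmetic remarks: your application of part (4) to $B^c=B_1^c\cup\ldots\cup B_q^c$ is to a cover rather than a partition, so strictly one should first disjointify the pieces --- though the paper's own proof glosses the identical point for the complement coloring of $\N$, so this is not a gap; and your verification of $(\star)$ coincides with the paper's, except that you make the final appeal to part (1) of the second proposition explicit where the paper leaves it implicit.
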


\begin{proof}
Trivially $\N\in\F$, and $\F$ is closed under supersets.
Now assume, for sake of contradiction, that there are $A_1,\ldots,A_k\in\F$ 
with $A_1\cap\ldots\cap A_k=\emptyset$. By taking complements,
we obtain the finite coloring $\N=(A_1)^c\cup\ldots\cup(A_k)^c$.
By the above proposition, there exists a color $(A_i)^c$
that has positive upper density and includes a set 
of differences $\Delta(Y)$ for an infinite $Y$.
But then $\underline{d}(A_i)<1$ and $A_i\cap\Delta(Y)=\emptyset$,
contradicting $A_i\in\F$.

Now let $\V$ be any ultrafilter that extends $\F$ and let $A\in\V$.
Then $A$ is a $\Delta$-set with positive upper density.
Indeed, $A$ must be a $\Delta$-set, as otherwise its complement
$A^c\in\F$, and we would have $\emptyset=A\cap A^c\in\V$.
Similarly, $A$ must have positive upper density, as
otherwise $\underline{d}(A^c)=1$ and hence $A^c\in\F$,
and we would have $\emptyset=A\cap A^c\in\V$.
\end{proof}

\medskip
\section{Infinite exponential patterns}

In this second section we show how the arguments used in the previous section 
can be extended to produce \emph{infinite} monochromatic exponential configurations.
It is worth noting that the possibility of constructing \emph{infinite} patterns 
is one of the distinctive strengths of the ultrafilter technique in Ramsey Theory.
We observe that all monochromatic configurations proved in \cite{sa},
while possessing a richer structure that also involves multiplications, are all \emph{finite}.

\begin{theorem}\label{theorem2}
Let $f:\N\times\N\to\N$ be the function $f(n,m)=2^n m$
and let $\U$ be any ultrafilter on $\N$ that satisfies the following property:
\begin{itemize}
\item[($\dagger$)]
For every $A\in\U$ and for every $L$ there exist elements $b,c$ such that
$b, c, b+\ell c\in A$ for every $\ell\le L$.
\end{itemize}
Then for every $A\in f_*(\U\otimes\U)$
there exists an infinite sequence $(a_n)$
with the property that for all $i,j,k$ where $i<2j$ and $2j+1<k$, the triple
$\{\,x:=2^{a_i}a_{2j};\ y:=2^{a_{2j+1}}a_k;\ 2^x y\,\}\subseteq A$.
\end{theorem}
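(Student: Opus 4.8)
The plan is to mimic the proof of Theorem~\ref{theorem1}, but to build the sequence $(a_n)$ one \emph{pair} $(a_{2j},a_{2j+1})$ at a time, with a single application of property $(\dagger)$ at each stage. As in Theorem~\ref{theorem1}, I first translate the hypothesis $A\in f_*(\U\otimes\U)$ into the statement $\widehat A:=\{n\mid A/2^n\in\U\}\in\U$, where $A/2^n:=\{m\mid 2^n m\in A\}$, and I record the elementary equivalence that $A/2^n\in\U$ exactly when $n\in\widehat A$.

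Before constructing anything, I would read off from the desired conclusion which membership conditions each element must satisfy. Writing $x=2^{a_i}a_{2j}$ and $y=2^{a_{2j+1}}a_k$, and using $2^xy=2^{\,a_{2j+1}+2^{a_i}a_{2j}}a_k$, the requirement $\{x,y,2^xy\}\subseteq A$ unpacks into three conditions: $a_{2j}\in A/2^{a_i}$; $a_k\in A/2^{a_{2j+1}}$; and $a_k\in A/2^{\,a_{2j+1}+2^{a_i}a_{2j}}$. The key observation is that the exponent $a_{2j+1}+2^{a_i}a_{2j}$ has the shape $b+\ell c$ with $b=a_{2j+1}$, $c=a_{2j}$, $\ell=2^{a_i}$ — exactly the shape produced by $(\dagger)$ — so forcing all such exponents into $\widehat A$ is precisely what $(\dagger)$ buys us.

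The construction then proceeds by induction on $j$. Assume $a_0,\dots,a_{2j-1}$ have been chosen, that they all lie in $\widehat A$, and that every previously formed exponent $a_{2j'+1}+2^{a_i}a_{2j'}$ (for $j'<j$, $i<2j'$) lies in $\widehat A$ as well; for $j=0$ all intersections below are empty and the step is trivial. These invariants guarantee, via the equivalence above, that
$$S_j:=\widehat A\ \cap\!\!\bigcap_{i<2j}A/2^{a_i}\ \cap\!\!\bigcap_{\substack{j'<j\\ i<2j'}}A/2^{\,a_{2j'+1}+2^{a_i}a_{2j'}}$$
is a finite intersection of members of $\U$, hence $S_j\in\U$. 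Applying $(\dagger)$ to $S_j$ with $L_j=\max_{i<2j}2^{a_i}$ yields $b,c$ with $b,c,b+\ell c\in S_j$ for all $\ell\le L_j$, and I set $a_{2j}:=c$, $a_{2j+1}:=b$. Then $a_{2j},a_{2j+1}\in S_j\subseteq\widehat A$ restores the first invariant, while $b+2^{a_i}c\in S_j\subseteq\widehat A$ (for $i<2j$, so that $2^{a_i}\le L_j$) restores the second. I would then check the three conditions for every admissible triple: the condition on $a_{2j}$ holds since $a_{2j}=c\in A/2^{a_i}$ for $i<2j$; and both conditions on a top element $a_k$, chosen at some later stage $j''>j$ (so that $2j+1<k$), hold because $A/2^{a_{2j+1}}$ and $A/2^{\,a_{2j+1}+2^{a_i}a_{2j}}$ are among the defining factors of $S_{j''}$ — the former because $2j+1<2j''$, the latter by taking $j'=j$, $i<2j$ in the second intersection.

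The main subtlety — and the step I expect to demand the most care — is the bookkeeping of roles: each newly chosen pair $(a_{2j},a_{2j+1})$ must serve not only as the ``middle'' pair of future triples, but simultaneously as the top element $a_k$ of \emph{every} triple already initiated, and it is this double duty that dictates the precise shape of $S_j$. Once the invariants are correctly identified, $(\dagger)$ does all the work at one stroke at each stage, exactly as $(\star)$ did for the single triple in Theorem~\ref{theorem1}. If one wants the $a_n$ pairwise distinct, it suffices to intersect each $S_j$ with a cofinite set pushing the new elements beyond all previous ones, which is harmless since $(\dagger)$ forces $\U$ to be nonprincipal.
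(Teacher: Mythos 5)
Your proposal is correct and follows essentially the same route as the paper's proof: the same translation to $\widehat{A}\in\U$, the same stage-$j$ intersection set (your $S_j$ is the paper's $A_n$), the same single application of $(\dagger)$ per stage with $L=\max_{i<2j}2^{a_i}$ and the same role assignment $b=a_{2j+1}$, $c=a_{2j}$ so that the exponents $a_{2j+1}+2^{a_i}a_{2j}$ land in $\widehat{A}$. The only difference is organizational: the paper carries the conclusion's memberships as two extra explicit inductive clauses, whereas you verify them at the end directly from the fact that later elements lie in $S_{j''}$ — the mathematical content is identical.
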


Note that $(\star)$ is the weaker version of the $(\dagger)$ property 
in which, instead of considering every $\ell\le L$, only a single 
$\ell$ is considered.

We remark that the existence of ultrafilters with the property $(\dagger)$ is not a problem, 
since every idempotent minimal ultrafilter of $(\beta\N,\oplus)$ satisfies it.
Since we have stated that we do not use the properties of the algebra of ultrafilters, 
we give below a direct proof that is obtained from Brauer's Theorem, 
an improvement of van der Waerden's Theorem in which even the common 
difference of the monochromatic arithmetic progression has the same color.

\begin{itemize}
\item
\emph{Brauer's Theorem} (1929).
For every finite coloring $\N=C_1\cup\ldots\cup C_r$ there
exists a color $C_i$ such that for every $\ell$ one finds
a monochromatic pattern $\{b, c, b+c, \ldots, b+\ell c\}\subseteq C_i$.
\end{itemize}

We are not aware of any proof of Brouer's Theorem that 
only uses simple arguments grounded on Ramsey's Theorem and asymptotic density.

\begin{theorem}
There exist ultrafilters $\U$ that satisfy property $(\dagger)$.
\end{theorem}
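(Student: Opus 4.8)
The plan is to mimic the finite-intersection-property construction used for $(\star)$ at the end of Section 1, replacing the role played there by density and $\Delta$-sets with the partition regularity supplied by \emph{Brauer's Theorem}. First I would isolate the relevant class of sets: call a set $A\subseteq\N$ a \emph{Brauer set} if for every $L$ there exist $b,c$ with $\{b,c,b+c,\ldots,b+Lc\}\subseteq A$. With this terminology, saying that an ultrafilter $\U$ has property $(\dagger)$ is precisely saying that \emph{every} member of $\U$ is a Brauer set, since $b,c\in A$ together with $b+\ell c\in A$ for all $\ell\le L$ is exactly the Brauer configuration of length $L$. Thus the whole task reduces to producing an ultrafilter all of whose elements are Brauer sets.

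The key structural fact is that Brauer sets are partition regular, and this is exactly what Brauer's Theorem delivers. Two elementary observations promote the theorem (stated for partitions) to the form I need. First, Brauer sets are closed under supersets, since any configuration contained in $A$ is also contained in any $A'\supseteq A$. Second, given a finite \emph{cover} $\N=A_1\cup\ldots\cup A_k$, I can refine it to a partition with pieces $C_i\subseteq A_i$ and $\N=C_1\cup\ldots\cup C_k$; Brauer's Theorem yields a color $C_i$ that is a Brauer set, and then $A_i\supseteq C_i$ is a Brauer set as well. Hence in any finite cover of $\N$ at least one piece is a Brauer set.

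Next I would form the family
$$\F=\{A^c\mid A\subseteq\N\text{ is not a Brauer set}\}$$
and check that it has the finite intersection property. Indeed, if $A_1,\ldots,A_k$ are not Brauer sets, then by the contrapositive of the previous paragraph they cannot cover $\N$, so $A_1\cup\ldots\cup A_k\ne\N$; equivalently $A_1^c\cap\ldots\cap A_k^c\ne\emptyset$. Thus $\F$ has the finite intersection property, and $\emptyset\notin\F$ because $\N$ is trivially a Brauer set. By the same Zorn's Lemma argument recalled in Section 1, any family with the finite intersection property extends to an ultrafilter $\U$.

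Finally I would verify that this $\U$ works. If some $A\in\U$ were not a Brauer set, then $A^c\in\F\subseteq\U$, giving $\emptyset=A\cap A^c\in\U$, a contradiction; hence every $A\in\U$ is a Brauer set, which is property $(\dagger)$. No step here is a genuine obstacle once Brauer's Theorem is taken as a black box, as the excerpt intends: all the combinatorial content sits inside that theorem. The only point requiring any care is the bookkeeping that passes from the partition form of Brauer's Theorem to the cover form (via refinement and upward closure) and recognizes $(\dagger)$ as the single assertion that $\U$ consists entirely of Brauer sets.
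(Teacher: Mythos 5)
Your proof is correct, and it takes a recognizably different route to the same destination. Both arguments rest on exactly the same three pillars --- Brauer's Theorem as a black box, the finite intersection property, and Zorn's Lemma --- but the filter bases differ. The paper never isolates your notion of ``Brauer set''; instead, for each $A\subseteq\N$ and each $\ell$ it introduces the set $\Gamma(A,\ell)$ of those $b$ admitting a witness $c$ for which the Brauer configuration lies entirely inside $A$ or entirely inside $A^c$, proves that the family of \emph{all} such $\Gamma(A,\ell)$ has the FIP by applying Brauer's Theorem to the partition of $\N$ into the atoms generated by $A_1,\ldots,A_k$ (the two-sided ``inside or outside'' disjunction is precisely what allows $A$ to range over arbitrary subsets), and then takes any ultrafilter extending that family. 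You instead invoke the standard duality between upward-closed partition regular families and ultrafilters: you extend the family of complements of non-Brauer sets, whose FIP amounts to the observation that non-Brauer sets cannot cover $\N$, after promoting Brauer's Theorem from partitions to covers via refinement and upward closure. Your packaging buys some genuine transparency: since your definition of Brauer set captures the whole progression $b,c,b+c,\ldots,b+Lc$ with a \emph{single} witness $c$, property $(\dagger)$ falls out immediately from ``every member of $\U$ is a Brauer set.'' In the paper's route the same point requires reading $\Gamma(A,\ell)$ as recording one $c$ that serves the entire progression up to $\ell$ (which is what its FIP proof actually delivers, since Brauer's Theorem places the full pattern $\{b,c,b+c,\ldots,b+\ell c\}$ in one atom); intersecting separate single-triple conditions $\Gamma(A,\ell)$ over $\ell\le L$ would only yield $\ell$-dependent witnesses, so your formulation sidesteps a bookkeeping subtlety that the paper's reader must resolve. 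What the paper's $\Gamma$-set construction buys in exchange is a completely explicit countable-by-construction family tailored for the direct verification of $(\dagger)$, without needing the cover-to-partition reduction or the general partition-regularity-to-ultrafilter lemma as intermediate steps.
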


\begin{proof}
For every $A\subseteq\N$ and for every $\ell$, let
$$\Gamma(A,\ell):=\{b\in\N\mid \exists c\in\N\ \text{s.t. either}\ 
b, c, b+\ell c\in A\ \text{or}\ b, c, b+\ell c\notin A\}.$$
The family $\mathcal{G}:=\{\Gamma(A,\ell)\mid A\subseteq\N,\ \ell\in\N\}$ has the FIP.
Indeed, given sets $\Gamma(A_j,\ell_j)$ for $j=1,\ldots,k$,
let $\ell=\max\{\ell_j\mid j=1,\ldots,k\}$, and let
$\N=C_1\cup\ldots\cup C_r$ be the finite coloring induced
by the family $\{A_1,\ldots,A_k\}$.\footnote
{~Precisely, the colors $C_i$ are obtained as the intersections
$\left(\bigcap_{j\in F}A_j\right)\cap\left(\bigcap_{j\notin F}(A_j)^c\right)$
for all $F\subseteq\{1,\ldots,k\}$. 
(We agree that $\bigcap_{j\in\emptyset}A_j=\bigcap_{j\in\emptyset}(A_j)^c=\N$.)}
By Brauer's Theorem there exists a color $C_i$ and a
monochromatic pattern $\{b, c, b+c, \ldots, b+\ell c\}\subseteq C_i$.
Then it is readily seen that the pair $(b,c)\in\bigcap_{j=1}^k\Gamma(A_j,\ell_j)$.
Finally, it is verified in a straightforward manner that
every ultrafilter $\U$ that extends the family $\mathcal{G}$
has the desired property $(\dagger)$.
\end{proof}

Before proving the above Theorem \ref{theorem2}, let us see two relevant
consequences of its about the existence of infinite monochromatic configurations.

\begin{corollary}\label{corollary2a}
For every finite coloring $\N=C_1\cup\ldots\cup C_r$ 
there exists a color $C_s$ and an infinite sequence $(a_n)$
such that for all $i,j,k$ with $i<2j$ and $2j+1<k$, the triple 
$\{\,x:=2^{a_i}a_{2j};\ y:=2^{a_{2j+1}}a_k;\ 2^x y\,\}\subseteq C_s$.

In particular, by letting $b_n:=2^{a_{2n-1}}a_{2n}$, 
we get the existence of an
infinite sequence $(b_n)$ such that
all elements $b_n, 2^{b_n}b_{n+1}\in C_s$ are monochromatic.
\end{corollary}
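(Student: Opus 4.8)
The plan is to mirror the derivation of Corollary \ref{corollary1a} from Theorem \ref{theorem1}, now invoking Theorem \ref{theorem2} in place of Theorem \ref{theorem1}. First I would fix an ultrafilter $\U$ enjoying property $(\dagger)$; its existence is guaranteed by the preceding theorem. Then I would form the image ultrafilter $f_*(\U\otimes\U)$ on $\N$ and, using the Ramsey property of ultrafilters, select the unique color $C_s$ that lies in $f_*(\U\otimes\U)$. Applying Theorem \ref{theorem2} to $A=C_s$ immediately yields an infinite sequence $(a_n)$ such that, for all $i<2j$ and $2j+1<k$, the triple $\{x,y,2^x y\}\subseteq C_s$, where $x=2^{a_i}a_{2j}$ and $y=2^{a_{2j+1}}a_k$. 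This establishes the first assertion with no further work.

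For the ``in particular'' claim, set $b_n:=2^{a_{2n-1}}a_{2n}$. The only thing to check is that each required membership is an instance of the triple property for a suitable choice of indices. Taking $i=2n-1$, $j=n$, and $k=2n+2$ gives $x=2^{a_{2n-1}}a_{2n}=b_n$ and $y=2^{a_{2n+1}}a_{2n+2}=b_{n+1}$; the constraints $i<2j$ and $2j+1<k$ read $2n-1<2n$ and $2n+1<2n+2$, both trivially true. Hence the triple $\{b_n,\,b_{n+1},\,2^{b_n}b_{n+1}\}\subseteq C_s$, so in particular $b_n\in C_s$ and $2^{b_n}b_{n+1}\in C_s$ for every $n$.

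The argument is essentially bookkeeping once Theorem \ref{theorem2} is in hand; there is no genuine obstacle. The only point demanding care is matching the subscripts of the $b_n$ to a legitimate triple $(i,j,k)$ with $i<2j$ and $2j+1<k$, which is precisely what the parity-based definition $b_n=2^{a_{2n-1}}a_{2n}$ is designed to make possible. Ranging over all $n$, these instances cover every $b_n$ (each appearing both as the first and as the second coordinate of a triple), giving a single infinite monochromatic sequence on which the exponential operation $b_n\mapsto 2^{b_n}b_{n+1}$ stays in the same color $C_s$.
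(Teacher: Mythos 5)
Your proposal is correct and follows essentially the same route as the paper: fix an ultrafilter $\U$ with property $(\dagger)$, take the color $C_s$ belonging to $f_*(\U\otimes\U)$, and apply Theorem \ref{theorem2}. Your explicit index check for the ``in particular'' clause (choosing $i=2n-1$, $j=n$, $k=2n+2$, so that $x=b_n$, $y=b_{n+1}$ and the constraints $i<2j$, $2j+1<k$ hold) is accurate and merely spells out what the paper leaves implicit.
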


\begin{proof}
Pick an ultrafilter $\U$ with property $(\dagger)$
and let $C_s$ be the color that belongs to 
the ultrafilter $f_*(\U\otimes\U)$.
\end{proof}

\begin{corollary}\label{corollary2b}
For every finite coloring $\N=C_1\cup\ldots\cup C_r$ 
there exists a color $C_s$ and an infinite sequence $(a_n)$
such that for all $i,j,k$ with $i<2j$ and $2j+1<k$, the triple
$\{\,a:=2^{2^{a_i}a_{2j}};\ b:=2^{2^{a_{2j+1}}a_k};\ b^a\,\}\subseteq C_s$.

In particular, by letting $b_n:=2^{2^{a_{2n-1}}a_{2n}}$, 
we get the existence of an
infinite sequence $(b_n)$ such that
all elements $b_n, (b_{n+1})^{b_n}\in C_s$ are monochromatic.
\end{corollary}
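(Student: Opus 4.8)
The plan is to derive Corollary \ref{corollary2b} directly from Corollary \ref{corollary2a} by exponentiating everything in base $2$, exactly as Corollary \ref{corollary1b} was obtained from Corollary \ref{corollary1a}. First I would recolor $\N$ using the same trick as in the proof of Corollary \ref{corollary1b}: given the coloring $\N=C_1\cup\ldots\cup C_r$, define the auxiliary coloring $\N=D_1\cup\ldots\cup D_r$ by setting $D_j=\{n\in\N\mid 2^n\in C_j\}$. This is genuinely a finite coloring of $\N$, so Corollary \ref{corollary2a} applies to it and produces a color $D_s$ together with an infinite sequence $(a_n)$ such that, for all $i,j,k$ with $i<2j$ and $2j+1<k$, the whole triple $\{\,x:=2^{a_i}a_{2j};\ y:=2^{a_{2j+1}}a_k;\ 2^x y\,\}\subseteq D_s$.

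The second step is the purely bookkeeping passage from the ``$\{x,y,2^xy\}$'' shape to the ``$\{a,b,b^a\}$'' shape. Fixing one admissible triple of indices $i,j,k$ and writing $x,y$ as above, membership of $x,y,2^xy$ in $D_s$ means by definition that $2^x,2^y,2^{2^xy}\in C_s$. Now I would set $a:=2^x=2^{2^{a_i}a_{2j}}$ and $b:=2^y=2^{2^{a_{2j+1}}a_k}$, and observe that the third element of the target triple is $b^a=(2^y)^{2^x}=2^{y\cdot 2^x}=2^{2^xy}$, which is precisely the element we already know lies in $C_s$. Hence $\{a,b,b^a\}\subseteq C_s$ for every admissible $(i,j,k)$, with the \emph{same} sequence $(a_n)$ serving throughout, which is what the statement asserts.

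For the ``In particular'' clause I would specialize to the diagonal choice of indices coming from the definition $b_n:=2^{2^{a_{2n-1}}a_{2n}}$. Applying the first part with the index triple $(i,j,k)=(2n-1,n,2n+2)$ — which satisfies $i=2n-1<2n=2j$ and $2j+1=2n+1<2n+2=k$ — gives $a=2^{2^{a_{2n-1}}a_{2n}}=b_n$, $b=2^{2^{a_{2n+1}}a_{2n+2}}=b_{n+1}$, and therefore $b^a=(b_{n+1})^{b_n}$; all three lie in $C_s$, so in particular $b_n,(b_{n+1})^{b_n}\in C_s$ for every $n$.

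I do not expect any real obstacle here: the entire argument is the exponential change of variables already rehearsed in Section 1, now applied with the infinite sequence furnished by Corollary \ref{corollary2a} rather than with a single triple. The only point requiring the slightest care is matching the index ranges in the ``In particular'' clause — verifying that the chosen $(i,j,k)=(2n-1,n,2n+2)$ meets the constraints $i<2j$ and $2j+1<k$ — and checking the one algebraic identity $b^a=2^{2^xy}$; both are immediate.
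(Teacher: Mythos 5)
Your proposal is correct and is exactly the paper's own argument: apply Corollary \ref{corollary2a} to the recoloring $D_j=\{n\mid 2^n\in C_j\}$ and translate via $b^a=(2^y)^{2^x}=2^{2^xy}$, which the paper leaves as ``readily verified'' and you spell out. Your explicit index check $(i,j,k)=(2n-1,n,2n+2)$ for the ``in particular'' clause is also the intended one and is correct.
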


\begin{proof}
It directly follows by applying the previous corollary
to the finite coloring $\N=D_1\cup\ldots\cup D_r$ 
where $D_s=\{n\mid 2^n\in C_s\}$. Indeed, pick 
an infinite sequence $(a_n)$ such that for all $i,j,k$ with $i<2j$ and $2j+1<k$, 
the triple $\{\,x:=2^{a_i}a_{2j};\ y:=2^{a_{2j+1}}a_k;\ 2^x y\,\}\subseteq D_s$.
Then it is readily verified that $(a_n)$ satisfies the desired properties.
\end{proof}

\begin{proof}[Proof of Theorem \ref{theorem2}]
By the definitions, $A\in f_*(\U\otimes\U)$ if and only if
$\widehat{A}:=\{n\in\N\mid A/{2^n}\in\U\}\in\U$.
Pick $a_1\in A'$, so that $A/{2^{a_1}}\in\U$.
Then $A_1:=A'\cap A/{2^{a_1}}\in\U$ and we can pick 
$a_2, a_3$ such that 
$a_3, a_2, a_3+\ell a_2\in A_1$ for every $\ell\le2^{a_1}$. 
In consequence, $2^{a_1}a_2, 2^{a_1}a_3\in A$
and $A/{2^{a_2}}, A/{2^{a_3}}, A/{2^{a_3+2^{a_1}a_2}}\in\U$.
Then $A_2:=A'\cap A/{2^{a_1}}\cap A/{2^{a_2}}\cap A/{2^{a_3}}\cap A/{2^{a_3+2^{a_1}a_2}}\in\U$,
and we can pick $a_4, a_5$ such that 
$a_5, a_4, a_5+\ell a_4\in A_2$ for every $\ell\le\max\{2^{a_1},2^{a_2},2^{a_3}\}$.
In consequence, $2^{a_i}a_4, 2^{a_i}a_5\in A$ for $i=1,2,3$, and
$2^{a_3+2^{a_1}a_2}a_4, 2^{a_3+2^{a_1}a_2}a_5\in A$, and 
$$A/{2^{a_4}}, A/{2^{a_5}}, A/{2^{a_5+2^{a_1}a_4}},A/{2^{a_5+2^{a_2}a_4}},
A/{2^{a_5+2^{a_3}a_4}}\in\U.$$ 

Now inductively proceed in this way, and assume that elements
$a_1,\ldots,a_{2n-1}$ have been defined in such a way that:
\begin{enumerate}
\item
$a_i\in A'$ for every $i\le 2n-1$;
\item
$a_{2j+1}+2^{a_i}a_{2j}\in A'$ for all $i<2j<2n-1$.
\item
$2^{a_i}a_k\in A$ for all $i<k\le 2n-1$, except when
$k=i+1$ is odd.
\item
$2^{a_{2j+1}+2^{a_i}a_{2j}}a_k\in A$ for all $i,j,k$ where
$i<2j$ and $2j+1<k\le 2n-1$.
\end{enumerate}

At the inductive step, consider
$$A_n:=A'\cap\bigcap_{i=1}^{2n-1}A/{2^{a_i}}\cap
\bigcap_{i<2j<2n-1} A/{2^{a_{2j+1}+2^{a_i}a_{2j}}}.$$
By properties (1) and (2), the set $A_n\in\U$.
Then we can pick $a_{2n}, a_{2n+1}$
such that $a_{2n+1}, a_{2n}, a_{2n+1}+\ell a_{2n}\in A_n$ 
for every $\ell\le\max\{2^{a_i}\mid i\le 2n-1\}$. Then all required
properties are satisfied by the sequence
$a_1,\ldots,a_{2n-1},a_{2n},a_{2n+1}$
obtained by adding the new two elements $a_{2n}, a_{2n+1}$. Indeed,

\begin{enumerate}
\item
$a_{2n},a_{2n+1}\in A'$, and hence
$a_i\in A'$ for every $i\le 2n+1$.
\item
$a_{2n+1}+2^{a_i}a_{2n}\in A'$ for every $i\le 2n-1$,
and hence
$a_{2j+1}+2^{a_i}a_{2j}\in A'$ for all $i<2j<2n+1$.
\item
$a_{2n},a_{2n+1}\in A/{2^{a_i}}\Leftrightarrow 2^{a_i}a_{2n}, 2^{a_i}a_{2n+1}\in A$ 
for every $i\le 2n-1$ (but in general $2^{a_{2n}}a_{2n+1}\notin A$),
and hence $2^{a_i}a_k\in A$ for all $i<k\le 2n+1$,
except when $k=i+1$ is odd.
\item
$a_{2n},a_{2n+1}\in A/{2^{a_{2j+1}+2^{a_i}a_{2j}}}\Leftrightarrow
2^{a_{2j+1}+2^{a_i}a_{2j}}a_{2n},2^{a_{2j+1}+2^{a_i}a_{2j}}a_{2n+1}\in A$, and hence
$2^{a_{2j+1}+2^{a_i}a_{2j}}a_k\in A$ for all $i,j,k$ where
$i<2j$ and $2j+1<k\le 2n+1$.
\end{enumerate}

By the above construction, we obtain an infinite sequence $(a_n)$
with the desired property. Indeed, let $i,j,k$ be such that
$i<2j$ and $2j+1<k$. Then $x:=2^{a_i}a_{2j}\in A$ and
$y:=2^{a_{2j+1}}a_k\in A$ by (3); moreover, also
$2^xy=2^{a_{2j+1}+2^{a_i}a_{2j}}a_k\in A$ by (4).
\end{proof}

\bibliographystyle{amsalpha}

\end{document}